\theoremstyle{plain}
\newtheorem{thm}{Theorem}
\newtheorem{cor}{Corollary}
\newtheorem{lem}{Lemma}
\begin{document}
\title[Combinatorial methods in right-angled Artin groups]
{Combinatorial methods for detecting surface subgroups in right-angled
Artin groups}
\author{Robert W. Bell}
\address{Lyman Briggs College\\
	W-32 Holmes Hall\\
	Michigan State University\\
	East Lansing, MI 48825-1107}
\email{rbell@math.msu.edu}
\date{November 28, 2010}
\begin{abstract}
	We give a short proof of the following theorem of Sang-hyun Kim:  if 
	$A(\Gamma)$ is a right-angled Artin group with defining graph 
	$\Gamma$, then $A(\Gamma)$ contains a hyperbolic surface subgroup if 
	$\Gamma$ contains an induced subgraph $\overline{C}_n$ for some 
	$n \geq 5$, where $\overline{C}_n$ denotes the complement graph of an 
	$n$-cycle.  Furthermore, we give a new proof of Kim's co-contraction
	theorem. 
\end{abstract}

\maketitle

\section{Introduction and Definitions}

Suppose $\Gamma$ is a simple finite graph with vertex set $V_\Gamma$ 
and edge set $E_\Gamma$.  We say that $\Gamma$ is the defining graph of
the right-angled Artin group defined by the presentation
\[A(\Gamma) = \langle V_\Gamma \ ; \ [v,w]:=vwv^{-1}w^{-1}=1 
\ \forall \ \{v,w\} \in E_\Gamma \rangle.\]
Right-angled Artin groups are also called graph groups or  partially 
commutative groups in the literature. All graphs in this article
are assumed to be simple and finite.

Right-angled Artin groups have been studied using both combinatorial and 
geometric methods.  In particular, it is well-known that these groups have 
simple solutions to the the word and conjugacy problems. Moreover,
each right-angled Artin group can be geometrically represented as the 
fundamental group of a nonpositively curved cubical complex $X_\Gamma$
called the Salvetti complex.  For these and other fundamental results, we 
refer the reader to the survey article by Charney \cite{CharneyMR2322545}.  

Let $\Gamma$ be a graph, and suppose that $W \subset V_\Gamma$.  The induced 
subgraph $\Gamma_W$ is the maximal subgraph of $\Gamma$ on the vertex set 
$W$.  A subgraph $\Lambda \subset \Gamma$ is called an induced subgraph if 
$\Lambda = \Gamma_{V_\Lambda}$.  In this case, the subgroup of $A(\Gamma)$ 
generated by $V_\Lambda$ is canonically isomorphic to $A(\Lambda)$.  This 
follows from the fact that $f:A(\Gamma) \to A(\Lambda)$ given by $f(v) = v$
if $v \in V_\Lambda$ and $f(v) = 1$ if $v \notin V_\Lambda$ defines
a retraction.  Therefore, we identify $A(\Lambda)$ with its image 
in $A(\Gamma)$.

In this article, we study the following problem: find conditions on
a graph $\Gamma$ which imply or deny the existence of hyperbolic surface
subgroup in $A(\Gamma)$.  Herein, we say that a group is a hyperbolic surface
group if it is the fundamental group of a closed orientable surface with
negative Euler characteristic. 

Droms, Servatius, and Servatius proved that if $\Gamma$ contains
an induced $n$-cycle, i.e. the underlying graph of a regular $n$-gon, 
for some $n \geq 5$, then $A(\Gamma)$ has a hyperbolic surface subgroup 
\cite{ServatiusDromsServatiusMR952322}.  In fact, they construct
an isometrically embedded closed surface of genus $1 + (n-4)2^{n-3}$
in the cover of the Salvetti complex $X_\Gamma$ corresponding to the 
commutator subgroup of $A(\Gamma)$. 

Kim and, independenly, Crisp, Sageev, and Sapir gave the first examples of 
graphs without induced $n$-cycles, $n \geq 5$, which define a right-angled 
Artin groups which, nonetheless, contain hyperbolic surface subgroups 
\cite{KimMR2443098, CrispSageevSapirMR2422070}.  We give one such example 
here to illustrate the main lemma of this article.

Consider the graphs in Figure~\ref{doublegraph}. The map 
$\phi: A(P') \to A(P)$ sending $v_1$ to $v_1^2$, $v_i$ to $v_i$, and 
$v_i'$ to $v_i$ for $i > 1$ defines an injective homomorphism onto an index 
two subgroup of $A(P)$ (see the discussion below).  Since $P'$ contains an 
induced circuit of length five (the induced subgraph on the vertices 
$v_2, \dots, v_5$ and $v_6'$), $A(P)$ contains a hyperbolic surface subgroup; 
however $P$ does not contain an $n$-cycle for any $n \geq 5$.

\begin{figure} \label{doublegraph}
\includegraphics[scale=.75]{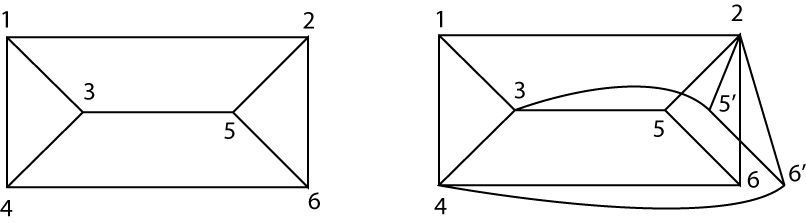}
\caption{The group given by the graph $P$ on the right injects into
the group given by the graph $P'$ on the left.  The vertex labeled by $i$
is referred to as $v_i$ in the discussion below.}
\end{figure}

That the map $\phi$ is injective can be seen from several persepectives.
The approach of Kim and Crisp, Sageev, and Sapir is to use dissection 
diagrams; these are collections of simple closed curves which are dual to 
van Kampen diagrams on a surface over the presentation $A(\Gamma)$. The
method was introduced in this context by Crisp and Wiest and used with much 
success by Kim and Crisp, Sageev, and Sapir 
\cite{CrispWiestMR2077673, KimMR2443098, CrispSageevSapirMR2422070}.

The purpose of this article is show how classical methods from combinatorial 
group theory can offer a somewhat different perspective and to simplify some of 
the arguments.  We will use the Reidemeister-Schreier rewriting process to
give a direct proof that the map $\phi$, above, is injective; and we also 
indicate how this can be proven using normal forms for splittings of groups.
This in turn will lead to a short proof of Kim's theorem on 
co-contractions (Theorem 4.2 in \cite{KimMR2443098}) alluded to in the
abstract; see Theorem~\ref{Kimtheorem} in this article.

\begin{lem}\label{main}
Suppose $A(\Gamma)$ is a graph group, and let $n$ be a positive 
integer.  Choose  a vertex $z \in V(\Gamma)$, and define 
$\phi:A(\Gamma) \to \langle x \ ; \ x^n = 1 \rangle 
\cong \mathbb{Z}/n\mathbb{Z}$ by 
$\phi(v) = 1$ if $v \neq z$ and $\phi(z) = x$.  
Then $\ker{\phi}$ is a graph group whose defining graph $\Gamma'$ is 
obtained by gluing $n$ copies of 
$\Gamma \backslash \, st(z)$ to $st(z)$ along $lk(z)$, where
$st$ and $lk$ are the star and link, respectively.  Moreover,
the vertices of $\Gamma'$ naturally correspond to the following generating
set: 
\[\{z^2\} \cup lk(z) \cup \{u : u \notin st(z)\} \cup \{zuz^{-1}: u \notin
st(z)\} \cup \cdots \cup \{z^{n-1}uz^{1-n}: \notin st(z)\}.\]
\end{lem}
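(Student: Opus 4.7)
The plan is to apply the Reidemeister--Schreier rewriting process to the subgroup $\ker\phi \le A(\Gamma)$, using the Schreier transversal $T = \{1, z, z^2, \ldots, z^{n-1}\}$, which maps bijectively onto $\mathbb{Z}/n\mathbb{Z}$ via $\phi$.

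First I would enumerate the Schreier generators $s_{z^i, v} = z^i v \, (\overline{z^i v})^{-1}$ for $0 \le i \le n-1$ and $v \in V_\Gamma$, by case analysis on whether $v = z$, $v \in lk(z)$, or $v \notin st(z)$. The nontrivial generators that survive are: (i) $z^n$, arising from $(i, v) = (n-1, z)$; (ii) each $v \in lk(z)$, which arises $n$ times but always equals $v$ itself because $v$ commutes with $z$; and (iii) each $z^i v z^{-i}$ for $v \notin st(z)$ and $0 \le i \le n-1$, all distinct. This generating set is in bijection with the vertex set of the claimed graph $\Gamma'$.

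Next I would rewrite each conjugated defining relator $z^i [v, w] z^{-i}$, for an edge $\{v, w\} \in E_\Gamma$ and $0 \le i \le n-1$, as a word in the Schreier generators using the standard letter-by-letter Reidemeister process. Splitting on the location of $v, w$ relative to $st(z)$: for $v, w \in lk(z)$ the rewriting yields $[v, w] = 1$; for $v \in lk(z)$ and $w \notin st(z)$ it yields $[v, \, z^i w z^{-i}] = 1$; for $v, w \notin st(z)$ it yields $[z^i v z^{-i}, \, z^i w z^{-i}] = 1$; and for $v = z$ (so $w \in lk(z)$) the rewriting is trivial when $0 \le i \le n-2$ but produces $[z^n, w] = 1$ when $i = n-1$. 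These are exactly the commutators indexed by the edges of $\Gamma'$: $z^n$ is adjacent to all of $lk(z)$, the induced subgraph on $lk(z)$ is preserved, each of the $n$ copies of $\Gamma \setminus st(z)$ retains its internal edges and its edges to $lk(z)$, and no edges connect distinct copies. Comparing this presentation with that of $A(\Gamma')$ identifies $\ker\phi$ with $A(\Gamma')$.

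I expect the main obstacle to be the $v = z$ case: the conjugated word $z^{i+1} w z^{-1} w^{-1} z^{-i}$ is already trivial in $A(\Gamma)$, so one cannot simply cancel using $[z, w] = 1$ but must instead trust the combinatorial rewriting and track the transversal representative carefully through each letter. The crucial point is that the representative only wraps around past $z^{n-1}$ when $i = n-1$; at that moment the letter $z$ produces the generator $z^n$, and the subsequent $z^{-1}$ (read once the representative has reset to $1$) produces its inverse, yielding the commutator $[z^n, w]$. For smaller $i$ every intermediate Schreier generator is trivial and the rewriting collapses to the empty word, as expected.
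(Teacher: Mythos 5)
Your proposal follows the paper's proof almost exactly: the same Schreier transversal $\{1,z,\dots,z^{n-1}\}$, the same enumeration of Schreier generators, and the same case analysis on the conjugated relators $z^i[v,w]z^{-i}$. The one place where your write-up goes genuinely wrong is the case $v=z$, $w\in lk(z)$, $0\le i\le n-2$: the rewriting there is \emph{not} the empty word. The Schreier generators $s(z^i,w)=z^iwz^{-i}$, $i=0,\dots,n-1$, are $n$ distinct free generators of $P=\pi^{-1}(\ker\phi)$; they cannot ``equal $w$ itself because $w$ commutes with $z$,'' since that commutation is a relation of $A(\Gamma)$, not of the free group in which the Schreier generators live. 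The rewriting of $z^i[z,w]z^{-i}$ produces exactly $s(z^{i+1},w)\,s(z^i,w)^{-1}$ for $i\le n-2$, and $z^n\,s(1,w)\,z^{-n}\,s(z^{n-1},w)^{-1}$ for $i=n-1$; these are precisely the relators that identify the $n$ a priori distinct copies of each link vertex and force $[z^n,w]=1$ --- that is, they are the entire reason the $n$ copies of $\Gamma\setminus st(z)$ are glued along a single copy of $lk(z)$ rather than sitting disjointly. As written, your argument pre-identifies these generators in step (ii) and then declares the relators responsible for that identification to be trivial: circular bookkeeping that happens to land on the correct presentation but does not derive it. The fix is small and brings you in line with the paper: keep all $n$ generators $s(z^i,w)$ for every $w\ne z$, compute the rewritten relators as above, read off from them the identifications $s(1,w)=\cdots=s(z^{n-1},w)$ and $[z^n,w]=1$ for $w\in lk(z)$, and only then eliminate the redundant generators by Tietze transformations.
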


The proof is a fairly straightforward computation (or geometric observation 
from the point of view of covering space theory) using the Reidemeister-
Schreier method.  The details are given in Section \ref{RSM}.  
Applying Lemma~\ref{main} to the graphs in Figure~\ref{doublegraph}, proves 
that $A(P')$ injects into $A(P)$: if $\phi: A(P) \to \mathbb{Z}/2\mathbb{Z}$ 
maps $z = v_1$ to 1 mod 2, then $A(P') = \ker \phi$.

Another way to prove Lemma~\ref{main} is to take advantage of ``visual''  
splittings of the groups $A(\Gamma)$ and $A(\Gamma')$ as an HNN extension
or amalgamated free product.  This second approach is stated in the 
article by Crisp, Sageev, and Sapir (see Remark 4.1 in 
\cite{CrispSageevSapirMR2422070}).  

We illustrate the utility of Lemma~\ref{main} by giving a short proof of the 
following theorem of Kim.

\begin{thm}[Kim \cite{KimMR2443098}, Corollary 4.3 (2)]\label{Kim}
	Let $\overline{C}_n$ denote the complement graph of an $n$-cycle.
	For each $n \geq 5$, the group $A(\overline{C}_n)$ contains a 
	hyperbolic surface subgroup.  
\end{thm}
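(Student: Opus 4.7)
The plan is to proceed by induction on $n \geq 5$.  For the base case $n = 5$, the graph $\overline{C}_5$ is itself a $5$-cycle, so the theorem of Droms, Servatius, and Servatius \cite{ServatiusDromsServatiusMR952322} gives a hyperbolic surface subgroup of $A(\overline{C}_5)$ at once.

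For the inductive step, assume the result for $\overline{C}_{n-1}$ and let $n \geq 6$.  Label the vertices of $\overline{C}_n$ as $v_1, \ldots, v_n$ so that $v_i v_j$ is an edge if and only if $|i-j| \not\equiv \pm 1 \pmod n$.  I would apply Lemma~\ref{main} with $z = v_1$, taking the integer in the lemma's statement to be $2$.  Since $lk(v_1) = \{v_3, v_4, \ldots, v_{n-1}\}$ and $V \setminus st(v_1) = \{v_2, v_n\}$, the lemma produces an index-two subgroup of $A(\overline{C}_n)$ isomorphic to $A(\Gamma')$, where $\Gamma'$ is obtained by gluing two copies of the edge on $\{v_2, v_n\}$ to $st(v_1)$ along $lk(v_1)$; I write the second copy as $v_2' = v_1 v_2 v_1^{-1}$ and $v_n' = v_1 v_n v_1^{-1}$.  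The central claim is that the induced subgraph of $\Gamma'$ on the $n-1$ vertices $\{v_3, v_4, \ldots, v_{n-1}, v_2, v_n'\}$ is isomorphic to $\overline{C}_{n-1}$.  Granting this, $A(\overline{C}_{n-1})$ embeds in $A(\Gamma') \leq A(\overline{C}_n)$, and the inductive hypothesis supplies the desired surface subgroup.

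To verify the claim one enumerates the non-edges of the induced subgraph: the ``path'' of non-edges $v_3v_4, v_4v_5, \ldots, v_{n-2}v_{n-1}$ inherited from $lk(v_1)$ inside $\overline{C}_n$; the two non-edges $v_2v_3$ and $v_{n-1}v_n'$, each arising from a defining non-edge of the underlying $n$-cycle; and the cross-copy non-edge $v_2v_n'$, which holds because the two vertices lie in distinct translates of $V \setminus st(v_1)$.  These $n - 1$ non-edges assemble into the single cycle $v_2 - v_3 - v_4 - \cdots - v_{n-1} - v_n' - v_2$, and every other pair among the chosen vertices is seen to be an edge in $\Gamma'$ directly from the adjacencies of $\overline{C}_n$.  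Hence the complement of the induced subgraph is precisely $C_{n-1}$, so the induced subgraph itself is $\overline{C}_{n-1}$.  The main obstacle is exactly this bookkeeping of non-edges across the three ``types'' (inside $lk(v_1)$, at the boundary of $st(v_1)$, and across distinct copies), since a single missed adjacency would destroy the cycle structure of the complement and break the induction.
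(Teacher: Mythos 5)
Your proposal is correct and follows essentially the same route as the paper: the base case is $\overline{C}_5 \cong C_5$ handled by Droms--Servatius--Servatius, and the inductive step applies Lemma~\ref{main} with $n=2$ at a single vertex of $\overline{C}_n$ and locates an induced $\overline{C}_{n-1}$ in the resulting double (the paper isolates this step as Lemma~\ref{cn}, choosing one vertex from each of the two copies of the non-star together with the link, exactly as you do up to relabeling). Your bookkeeping of the $n-1$ non-edges checks out.
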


In fact, we give a new proof of Kim's more general theorem
(Theorem 4.2 in \cite{KimMR2443098}) on co-contractions 
of right-angled Artin groups in Section \ref{twotheorems}.  Kim's proof used 
the method of dissection diagrams. Kim has also discovered a short proof 
using visual splittings (personal communication).

In preparing this article, we found that the Reidemeister-Schreier method has 
been used previously to study certain Bestvina-Brady subgroups of right-angled 
Artin groups (see \cite{LevyParkerVanWykMR1626045} and 
\cite{BeckerHorakVanWykMR1611316}).

This work was inspired by a desire to better understand Crisp, 
Sageev, and Sapir's very interesting classification of the graphs on fewer than
nine vertices which define right-angled Artin groups with hyperbolic surface
subgroups.  We hope that our retelling of this small piece of theirs and
Kim's work will help to clarify some aspects of the general problem.

The author would like to thank several colleagues for advice and feedback:
Jon Hall, Ian Leary, Ulrich Meierfrankenfeld, and especially Mike Davis and 
Tadeusz Januskiewicz.  Furthermore, the author owes many of the ideas
herein to Kim, Crisp, Sageev, and Sapir, as this article is essentially 
an account of some of his attempts to better understand their body of work on 
this problem.

\section{The Reidemeister-Schreier Method and proof of Lemma~\ref{main}} 
\label{RSM}

The Reidemeister-Schreier method solves the following problem: suppose that 
$G$ is a group given by the presentation $\langle X \,;\, R \rangle$, and 
suppose $H \subset G$ is a subgroup; find a presentation for $H$.  
The treatment below is brisk; see \cite{BaumslagMR1243634} 
for details and complete proofs.

Let $F = F(X)$ be free with basis $X$, and let $\pi: F \to G$ extend the 
identity map on $X$.  Consider the preimage $P = \pi^{-1}(H)$.  
Let $T \subset F$ be a right Schreier transversal for $P$ in $F$, i.e. $T$ 
is a complete set of right coset representatives that is closed under the 
operation of taking intial subwords (of freely reduced words over $X$).  
Given $w \in F$, let $[w]$ be the unique element of $T$ such that $Pw = P[w]$. 
For each $t \in T$ and $x \in X$, let $s(t,x) = tx[tx]^{-1}$.  Define 
$S = \{ s(t,x) : t \in T, x \in X, \text{ and } s(t,x) \neq 1\}$.
Then $S$ is a basis for the free group $P$.  Define a
rewriting process $\tau: F \to P$ on freely reduced words over $X$ by 
\[\tau(y_1 y_2 \cdots y_n) = s(1,y_1) s([y_1],y_2) \cdots 
s([y_1 \cdots y_{n-1}],y_n),\]
where $y \in X \cup X^{-1}$.  Then $\tau(w) = w[w]^{-1}$ for 
every reduced word $w \in F$, and 
\[H = \langle S \ ; \ 
\tau(t^{-1}rt) = 1 \ \forall \ t \in T, r \in R \rangle.\]
This rewriting process together with the resulting presentation for the given 
subgroup $H$ of $G = \langle X; R \rangle$ is called the Reidemeister-Schreier
Method.

\emph{Proof of Lemma~\ref{main}.} 
Let $\Gamma$ be a graph, $G = A(\Gamma)$ the 
corresponding right-angled Artin group, and $z$ a distinguished vertex of 
$\Gamma$.  Let $\phi: G \to \langle x\, ;\, x^n \rangle$ be given by 
$\phi(v) = 1$ if $v \neq z$ and $\phi(z) = x$.  

Let $F$ be free on $X = V_{\Gamma}$ and let $R$ be
the set of defining relators corresponding to $E_{\Gamma}$.
Let $H = \ker{\phi}$, and let $P$ be the inverse image of $H$ in $F$ under 
the natural map $F \to G$. The set $T = \{\, 1, z, \dots, z^{n-1} \,\}$ 
is a right Schreier transversal for $P < F$.  One verifies (directly) 
that the following equations hold: 

\[
\begin{array}{ll}
s(z^k, v) = z^k v z^{-k}& 
\text{ if } v \neq z \text{ and } k = 0, \dots, n-1\\

s(z^k ,z) = 1&
\text{ if } k = 0, \dots, n-2\\

s(z^k, z) = z^n& 
\text{ if } k = n-1.  
\end{array}
\]

Thus, we have a set $S$ of generators for $\ker \phi$; however, many of 
these generators are redundant.  Again, one verifies (using
$\tau(w) = w[w]^{-1}$) that the following equations hold:

\[
\begin{array}{ll}
\tau(z^k[u,v]z^{-k}) = [(s(z^k,u),s(z^k,v)]& 
\text{ if } u, v \neq z \text{ and } k= 0, \dots, n-1\\

\tau(z^k [z,v] z^{-k}) = s(z^{k+1},v) \cdot s(z^{k},v)^{-1}&
\text{ if } v \neq z \text{ and } k = 0, \dots n-2\\

\tau(z^k [z,v] z^{-k}) = z^n \cdot s(1,v) \cdot z^{-n} 
\cdot s(z^{n-1},v)^{-1}&
\text{ if } v \neq z \text{ and } k = n-1.  
\end{array}
\]

Therefore, if $[z,v] = 1$ is a relation in $A(\Gamma)$, then 
\[v = s(1,v) = s(z,v) = \dots = s(z^{n-1},v) \text{ and } [z^n,v] = 1\]
hold in $\ker \phi$.  It follows that $\ker \phi$ is generated by
$z^n$, the vertices adjacent to $z$ in $\Gamma$, and $n$ copies
($u, zuz^{-1}, \dots, z^{n-1}uz^{1-n}$) of each vertex $u \neq z$ 
and not adjacent to $z$ in $\Gamma$.  Moreover, the relations are such
that $\ker \phi$ is presented as a right-angled Artin group where the 
defining graph is obtained from $\Gamma$ by taking the star of $z$ and $n$ 
copies of the complement of the star of $z$ and gluing these copies along the 
link of $z$.  This completes the proof of Lemma~\ref{main}.

\section{A short proof of two theorems of Kim} \label{twotheorems}

Suppose that $\Gamma$ is a graph.  The complement graph $\overline{\Gamma}$
is the graph having the same vertices as $\Gamma$ but which has edges 
complementary to the edges of $\Gamma$. Recall that an $n$-cycle $C_n$ is
the underlying graph of a regular $n$-gon.

Theorem~\ref{Kim} follows from Kim's co-contraction theorem 
(see Theorem~\ref{Kimtheorem} below); however, we present a short independent 
proof here.

\emph{Proof of Theorem~\ref{Kim}.}
Suppose that $\Gamma$ is a graph which contains an induced $C_5$.  Then 
$A(\Gamma)$ contains a hyperbolic surface subgroup by 
\cite{ServatiusDromsServatiusMR952322}.  Since $C_5 \cong \overline{C}_5$, 
Theorem~\ref{Kim} follows from the following Lemma~\ref{cn}.

\begin{lem}[Kim \cite{KimMR2443098}, Corollary 4.3 (1)]\label{cn}
For each $n \geq 4$, $A(\overline{C}_{n-1}) < A(\overline{C}_{n})$.
\end{lem}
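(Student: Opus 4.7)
The plan is to apply Lemma~\ref{main} to $\Gamma = \overline{C}_n$ with kernel parameter $2$ and then exhibit $\overline{C}_{n-1}$ as an induced subgraph of the resulting defining graph $\Gamma'$. Label the vertices of the underlying $n$-cycle cyclically as $v_0, v_1, \ldots, v_{n-1}$, so that $v_iv_j$ is an edge of $\overline{C}_n$ iff $|i-j| \not\equiv \pm 1 \pmod n$. Taking $z = v_0$, one computes $\mathrm{lk}(z) = \{v_2, \ldots, v_{n-2}\}$ and $V_\Gamma \setminus \mathrm{st}(z) = \{v_1, v_{n-1}\}$. Let $\phi \colon A(\overline{C}_n) \to \mathbb{Z}/2\mathbb{Z}$ send $z$ to $1$ and every other generator to $0$. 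By Lemma~\ref{main}, $\ker\phi \cong A(\Gamma')$, where $\Gamma'$ has vertex set $\{z^2\} \cup \mathrm{lk}(z) \cup \{v_1,\, v_{n-1},\, zv_1z^{-1},\, zv_{n-1}z^{-1}\}$.

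The crux of the argument is to verify that the $n-1$ vertices $W = \{v_1,\, v_2,\, v_3,\, \ldots,\, v_{n-2},\, zv_{n-1}z^{-1}\}$ induce a copy of $\overline{C}_{n-1}$ in $\Gamma'$. I would order them cyclically as $u_0 = v_1$, $u_i = v_{i+1}$ for $1 \leq i \leq n-3$, and $u_{n-2} = zv_{n-1}z^{-1}$, and then check that $u_iu_j$ is an edge of $\Gamma'$ iff $|i-j| \not\equiv \pm 1 \pmod{n-1}$. This splits into three cases: (a) inside $\mathrm{lk}(z)$, the $\overline{C}_n$ adjacency rule $|i-j| \geq 2$ matches the $\overline{C}_{n-1}$ rule on the interior indices; (b) $v_1$ and $zv_{n-1}z^{-1}$ are joined in $\Gamma'$ to precisely those $v_j \in \mathrm{lk}(z)$ with $j \neq 2$ and $j \neq n-2$ respectively, inherited from $\overline{C}_n$ via the description in Lemma~\ref{main}; and (c) $v_1$ and $zv_{n-1}z^{-1}$ are \emph{not} joined in $\Gamma'$ because they lie in different sheets of the kernel construction.

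The key point is (c): picking $v_1$ from the first sheet and $v_{n-1}$ from the second sheet suppresses the edge $v_1v_{n-1}$ (which is present in $\overline{C}_n$ for every $n \geq 4$), and this missing chord is exactly what distinguishes $\overline{C}_{n-1}$ from the naive subgraph $\overline{C}_n \setminus \{v_0\}$, the latter being the complement of a path and hence retaining $v_1 v_{n-1}$. Once $W$ has been identified as an induced $\overline{C}_{n-1}$ in $\Gamma'$, the retraction argument recalled in the Introduction yields the chain of embeddings $A(\overline{C}_{n-1}) < A(\Gamma') = \ker\phi < A(\overline{C}_n)$. The main obstacle is purely combinatorial bookkeeping of indices mod $n$ against mod $n-1$, but the choice of $W$ is arranged precisely so that the two adjacency patterns line up automatically.
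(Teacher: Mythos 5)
Your proposal is correct and takes essentially the same route as the paper: both apply Lemma~\ref{main} with the $\mathbb{Z}/2\mathbb{Z}$ quotient killing all generators but one vertex $z$ of $\overline{C}_n$, and both select $n-1$ vertices of the kernel's defining graph with exactly one of the two non-star vertices replaced by its conjugate from the other sheet, so that the chord joining them disappears and the induced subgraph becomes $\overline{C}_{n-1}$. The only difference is the immaterial choice of which of the two non-star vertices is taken in conjugated form (the paper uses $zx_1z^{-1}$ and $x_{n-1}$; you use $v_1$ and $zv_{n-1}z^{-1}$).
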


\begin{proof} Let $V_{C_n} = \{ x_1, \dots, x_n\} = V_{\overline{C}_n}$.  
Define $\phi: A(\overline{C}_{n}) \to \langle a \ ; \ a^2 \rangle$ by 
$\phi(x_n) = a$ and $\phi(x_i) = 1$ for $i \neq n$.  By Lemma~\ref{main}, the 
defining graph $\Gamma$ of $\ker{\phi}$ has vertex set 
$V_{\Gamma} = \{z^2\} \cup \{x_1, \dots, x_{n-1}\} \cup \{ y_1, y_{n-1} \}$, 
where $z = x_n$ and $y_i = z x_i z^{-1}$.
Let $S = \{y_1, x_2, \dots, x_{n-1}\}$.  Consider the induced subgraph 
$\Gamma_S$.  The vertices $y_1$ and $x_i$ are not adjacent if and only if 
$i \in \{2, n-1\}$.  The vertices $x_i$ and $x_j$ are not adjacent if and 
only if $|i-j| \leq 1$.  Therefore, $\Gamma_{S} \cong \overline{C}_{n-1}$.
\end{proof}

Kim proved a more general theorem about subgroups of a right-angled Artin 
group $A(\Gamma)$ defined by ``co-contractions''.  Let $S \subset V_{\Gamma}$, 
and let $S' = V_{\Gamma} \backslash S$.  If $\Gamma_S$ is 
connected, then the contraction $CO(\Gamma,S)$ of $\Gamma$ relative
to $S$ is defined by taking the induced subgraph $\Gamma_{S'}$ together 
with a vertex $v_{S}$ and declaring $v_{S}$ to be adjacent to 
$w \in S'$ if $w$ is adjacent in $\Gamma$ to some vertex in $S$.
The co-contraction $\overline{CO}(\Gamma,S)$ is defined as follows:
\[\overline{CO}(\Gamma,S) = \overline{CO(\overline{\Gamma},S)}\]

Kim insists that $\Gamma_S$ be connected whenever he considers the contraction
$CO(\Gamma,S)$.  This assumption is not necessary.  Moreover, the following
lemma shows that the structure of $\Gamma_S$ is immaterial; the proof
follows directly from the definitions.

\begin{lem} \label{Sindependence}
Suppose $\Gamma$ is a graph and $S \subset V_\Gamma$. Let $\Gamma'$ be
the graph obtained from $\Gamma$ by removing any edges joining two elements
of $S$.  Then $CO(\Gamma,S) = CO(\Gamma',S)$.
\end{lem}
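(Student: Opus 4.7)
The plan is to unpack the definition of $CO(\Gamma,S)$ and verify that each ingredient in the construction depends only on $\Gamma_{S'}$ and on the bipartite adjacency between $S$ and $S'$, never on edges internal to $S$. Since $\Gamma$ and $\Gamma'$ differ only in edges whose both endpoints lie in $S$, the two contractions will agree on the nose.

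First, I would note that $\Gamma$ and $\Gamma'$ have the same vertex set, the same set $S'=V_\Gamma\setminus S$, and that an edge of $\Gamma$ is removed in passing to $\Gamma'$ only if both its endpoints belong to $S$. In particular, the induced subgraph $\Gamma_{S'}$ equals $\Gamma'_{S'}$, because any edge of $\Gamma_{S'}$ has both endpoints in $S'$, hence neither endpoint in $S$, so it is not among the edges removed. This shows that the ``$\Gamma_{S'}$-part'' of the two contractions coincides.

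Second, I would check that the extra vertex $v_S$ has the same neighbors in $CO(\Gamma,S)$ as in $CO(\Gamma',S)$. By definition, $v_S$ is joined to $w\in S'$ exactly when $w$ is adjacent (in the ambient graph) to some vertex of $S$, i.e., when there exists an edge $\{w,s\}$ with $s\in S$. Such an edge has one endpoint in $S'$ and one in $S$, so again it is not among the edges removed in passing from $\Gamma$ to $\Gamma'$. Hence the adjacency criterion yields the same set of neighbors of $v_S$ in both contractions.

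Combining these two observations, the graphs $CO(\Gamma,S)$ and $CO(\Gamma',S)$ have identical vertex sets and identical edge sets, so they are equal. There is really no obstacle here: the only thing to be careful about is confirming that every edge used in the construction of the contraction either lies in $\Gamma_{S'}$ or crosses between $S$ and $S'$, so that edges internal to $S$ are genuinely irrelevant; this is immediate from the definition.
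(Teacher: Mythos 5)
Your proof is correct and matches the paper's approach: the paper simply asserts that the lemma ``follows directly from the definitions,'' and your two checks (that $\Gamma_{S'}=\Gamma'_{S'}$ and that the neighbors of $v_S$ depend only on edges between $S$ and $S'$) are exactly the direct verification intended. No issues.
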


\begin{cor}
Suppose $\Gamma$ is a graph and $S \subset V_\Gamma$.  Let $\Gamma'$ be
a graph obtained from $\Gamma$ by adding or deleting any collection of
edges with both of their vertices belonging to $S$.  Then $CO(\Gamma,S)
= CO(\Gamma',S)$ and $\overline{CO}(\Gamma,S) = \overline{CO}(\Gamma',S)$.
\end{cor}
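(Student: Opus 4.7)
The plan is to reduce both claims directly to Lemma~\ref{Sindependence} by a symmetrization argument. For the first claim, I would let $\Gamma_0$ denote the graph obtained from $\Gamma$ by removing every edge whose two endpoints both lie in $S$. Since $\Gamma$ and $\Gamma'$ are assumed to agree outside of edges within $S$, performing the same removal on $\Gamma'$ also yields $\Gamma_0$. Lemma~\ref{Sindependence} then gives $CO(\Gamma,S) = CO(\Gamma_0,S)$ and $CO(\Gamma',S) = CO(\Gamma_0,S)$, so they are equal.

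For the second claim, I would pass to complements. The key observation is that an edge of $\Gamma$ with both endpoints in $S$ corresponds under complementation to a non-edge of $\overline{\Gamma}$ with both endpoints in $S$, and vice versa. Therefore, if $\Gamma$ and $\Gamma'$ differ only in edges inside $S$, then $\overline{\Gamma}$ and $\overline{\Gamma'}$ also differ only in edges inside $S$. Applying the first claim (just proved) to $\overline{\Gamma}$ and $\overline{\Gamma'}$ yields $CO(\overline{\Gamma},S) = CO(\overline{\Gamma'},S)$, and taking complements of both sides gives $\overline{CO}(\Gamma,S) = \overline{CO}(\Gamma',S)$ by the definition stated just before the corollary.

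There is no real obstacle here beyond bookkeeping: the corollary is essentially Lemma~\ref{Sindependence} plus the observation that any two graphs differing only inside $S$ have a common ``$S$-edge-free'' representative, together with the symmetric observation for complements. The only subtlety worth mentioning explicitly in the write-up is that the vertex set and the edges between $S$ and $S' = V_\Gamma \setminus S$ are preserved under the modifications allowed, since these are precisely the data used to build the contraction vertex $v_S$ and its adjacencies in $CO(\Gamma,S)$.
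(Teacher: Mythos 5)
Your argument is correct and is exactly the intended one: the paper states this corollary without proof immediately after Lemma~\ref{Sindependence}, the implicit reasoning being precisely your reduction of both $\Gamma$ and $\Gamma'$ to a common $S$-edge-free graph, plus the observation that complementation preserves the property of differing only on edges inside $S$. Nothing is missing.
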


\begin{lem} \label{twosuffice}
Suppose $\Gamma$ is a graph and $n \geq 2$.  Suppose $S = \{s_1, \dots, s_n\}
\subset V_{\Gamma}$.  
Let $\Lambda = \overline{CO}(\Gamma,\{s_1, \dots, s_{n-1})$ and 
$S' = S \backslash \{s_n\}$.  Then
$\overline{CO}(\Gamma,S) = \overline{CO}(\Lambda,\{v_{S'},s_{n}\})$.
\end{lem}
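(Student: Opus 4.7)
\emph{Proof proposal.} The plan is to reduce Lemma~\ref{twosuffice} to the following ``associativity'' identity for ordinary contraction: for every graph $G$ and every subset $S = S' \cup \{s_n\}$ of $V_G$ with $s_n \notin S'$,
\[
CO(G, S) \;=\; CO\bigl(CO(G, S'),\, \{v_{S'}, s_n\}\bigr).
\]
Granting this identity and applying it to $G = \overline{\Gamma}$, Lemma~\ref{twosuffice} follows at once: by definition of $\Lambda$ we have $\overline{\Lambda} = CO(\overline{\Gamma}, S')$, so taking complements yields
\[
\overline{CO}(\Gamma, S) \;=\; \overline{CO(\overline{\Gamma}, S)} \;=\; \overline{CO\bigl(\overline{\Lambda},\, \{v_{S'}, s_n\}\bigr)} \;=\; \overline{CO}(\Lambda, \{v_{S'}, s_n\}).
\]

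The verification of the associativity identity is a direct comparison of vertex sets and edge sets. Both graphs carry vertex set $(V_G \setminus S) \cup \{v\}$, where $v$ denotes the single new contracted vertex (identified with $v_S$ on the left and with the contracted vertex of $\{v_{S'}, s_n\}$ on the right). Edges among vertices of $V_G \setminus S$ coincide on the two sides with those of the induced subgraph $G_{V_G \setminus S}$, since $V_G \setminus S \subseteq V_G \setminus S'$ and contraction does not alter induced subgraphs on unaffected vertex sets. For an edge joining $v$ to a vertex $w \in V_G \setminus S$: on the left, this holds iff $w$ is $G$-adjacent to some element of $S$; on the right it holds iff $w$ is adjacent in $H := CO(G, S')$ either to $v_{S'}$ or to $s_n$. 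Adjacency of $w$ to $v_{S'}$ in $H$ unwinds to $G$-adjacency of $w$ to some element of $S'$, while adjacency of $w$ to $s_n$ in $H$ (legal, since both lie in $V_G \setminus S'$) is simply $G$-adjacency of $w$ to $s_n$. The disjunction is exactly $G$-adjacency of $w$ to some element of $S = S' \cup \{s_n\}$, as required.

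The one real subtlety is that the original definition of $CO(\Gamma, S)$ requires $\Gamma_S$ to be connected, whereas the set $\{v_{S'}, s_n\}$ inside $CO(G, S')$ need not be connected as an induced subgraph. This is precisely what Lemma~\ref{Sindependence} and its corollary are designed to defuse: the contraction depends only on the vertex set, not on its internal edges, so we may freely apply $CO$ to $(CO(G, S'), \{v_{S'}, s_n\})$ without worrying about connectivity. Beyond this I expect no obstacle; the main hazard is purely notational, namely to avoid conflating the two uses of the symbol $S'$ (the vertex-complement appearing in the ambient definition of $CO$ versus $S \setminus \{s_n\}$ in the hypothesis of the lemma).
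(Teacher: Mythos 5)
Your proposal is correct and is essentially the paper's argument: both proofs come down to observing that the two graphs agree away from the contracted vertex and then comparing the neighborhood of that vertex, which in either formulation unwinds to the same adjacency condition in $\Gamma$ (the paper checks ``adjacent to every $s_i$'' directly on the co-contraction side, while you check the complementary condition ``adjacent to some $s_i$'' for ordinary contraction and then conjugate by graph complementation). Your explicit appeal to Lemma~\ref{Sindependence} to dispose of the connectivity requirement on $\{v_{S'},s_n\}$ is a sound way to handle a point the paper addresses by simply dropping that requirement from the definition.
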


\begin{proof}
It suffices to compare the collection of vertices which are adjacent to 
$v_{S}$ in $\Gamma_1=\overline{CO}(\Gamma,S)$  and 
$\Gamma_2=\overline{CO}(\Lambda,\{v_{S'},s_{n}\})$; in the latter case, we are
identifying the vertex $v_{S' \cup \{s_n\}}$ with $v_{S}$.

A vertex $w$ in $\Gamma_1$ not belonging to $S$ is adjacent to $v_S$ if and only
if $w$ is adjacent to every $s_i$ in $\Gamma$.  A vertex $w$ in $\Gamma_2$ not 
equal to $v_{S'}$ nor $s_n$ is adjacent to $v_S$ if and only if $w$ is adjacent
to $v_{S'}$ and $s_n$ in $\Lambda$; but this, in turn, means that $w$ is 
adjacent to every $s_i$ in $\Gamma$.  (Note that the case of $n = 2$ is 
trivial since $CO(\Gamma,\{s_1\}) = \Gamma)$.)
\end{proof}

A collection of vertices $S \subset V_{\Gamma}$ is said to be anti-connected
if $\overline{\Gamma}_{S}$ is connected.  (Note: $(\overline{\Gamma})_{S} =
\overline{(\Gamma_S)}$.)

\begin{thm}[Kim \cite{KimMR2443098}, Theorem 4.2] \label{Kimtheorem}
Suppose $\Gamma$ is a graph and $S \subset V(\Gamma)$ is an anti-connected 
subset.  Then $A(\overline{CO}(\Gamma,S))$ embeds in $A(\Gamma)$.
\end{thm}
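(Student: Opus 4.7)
The plan is to induct on $|S|$, reducing via Lemma~\ref{twosuffice} to the base case $|S|=2$, which will itself follow from a single application of Lemma~\ref{main}. The case $|S|=1$ is immediate, as $\overline{CO}(\Gamma,\{s\})$ is simply $\Gamma$ with $s$ relabelled as $v_{\{s\}}$.

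For the base case $|S|=2$ with $S=\{s_1,s_2\}$, the anti-connectedness hypothesis forces $s_1$ and $s_2$ to be non-adjacent in $\Gamma$. I would apply Lemma~\ref{main} with distinguished vertex $z=s_1$ and $n=2$, embedding $A(\Gamma')$ into $A(\Gamma)$ where the vertex set of $\Gamma'$ is $\{s_1^2\} \cup lk(s_1) \cup \{u : u \notin st(s_1)\} \cup \{s_1 u s_1^{-1} : u \notin st(s_1)\}$. Since the construction glues the two copies of $\Gamma \setminus st(s_1)$ only along $lk(s_1)$, no edge of $\Gamma'$ joins an un-conjugated vertex $u$ to a conjugated vertex $s_1 u' s_1^{-1}$. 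I would then consider the induced subgraph $\Lambda$ of $\Gamma'$ on the vertex set $lk(s_1) \cup \{u : u \notin st(s_1),\, u \neq s_2\} \cup \{s_1 s_2 s_1^{-1}\}$. By the preceding remark, $s_1 s_2 s_1^{-1}$ is adjacent in $\Lambda$ only to those $w \in lk(s_1)$ with $w \sim s_2$ in $\Gamma$, that is, to precisely the common neighbors $lk(s_1) \cap lk(s_2)$; the remaining vertices of $\Lambda$ inherit their $\Gamma$-adjacencies. Thus $\Lambda \cong \overline{CO}(\Gamma,\{s_1,s_2\})$ with $s_1 s_2 s_1^{-1}$ playing the role of $v_S$, and so $A(\overline{CO}(\Gamma,\{s_1,s_2\})) \hookrightarrow A(\Gamma') \hookrightarrow A(\Gamma)$.

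For the inductive step $|S| \geq 3$, I would fix a spanning tree of the connected graph $\overline{\Gamma}_S$ and take $s_n$ to be a leaf. Setting $S' = S \setminus \{s_n\}$, the spanning tree with $s_n$ removed is a spanning tree of $\overline{\Gamma}_{S'}$, so $S'$ is anti-connected; the induction hypothesis yields $A(\Lambda_0) \hookrightarrow A(\Gamma)$ with $\Lambda_0 = \overline{CO}(\Gamma,S')$. Lemma~\ref{twosuffice} identifies $\overline{CO}(\Gamma,S)$ with $\overline{CO}(\Lambda_0,\{v_{S'},s_n\})$, and $\{v_{S'},s_n\}$ is anti-connected in $\Lambda_0$ because the leaf $s_n$ has a neighbor in the spanning tree lying in $S'$, hence $s_n$ is non-adjacent in $\Gamma$ to some element of $S'$, which gives $v_{S'} \not\sim s_n$ in $\Lambda_0$. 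The already-established base case applied to $\Lambda_0$ then produces $A(\overline{CO}(\Gamma,S)) \hookrightarrow A(\Lambda_0) \hookrightarrow A(\Gamma)$, completing the induction.

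The main obstacle is pinning down the correct induced subgraph in the base case. The idea is to use exactly one conjugated copy of $s_2$, namely $s_1 s_2 s_1^{-1}$, together with un-conjugated copies of all the other vertices, so that this ``new'' vertex is cut off from everything in $\Lambda$ outside $lk(s_1)$ and meets only the common neighbors of $s_1$ and $s_2$. The hypothesis $s_1 \not\sim s_2$ coming from anti-connectedness is essential: if $s_1$ and $s_2$ were adjacent then $s_2 \in st(s_1)$ and no conjugated copy of $s_2$ would be available in $\Gamma'$ to serve as the image of $v_S$.
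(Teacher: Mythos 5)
Your proof is correct and follows essentially the same route as the paper: the base case $|S|=2$ is handled by applying Lemma~\ref{main} with $n=2$ and passing to the induced subgraph on the unconjugated vertices together with the single conjugate $s_1 s_2 s_1^{-1}$ (which meets exactly $lk(s_1)\cap lk(s_2)$, i.e.\ plays the role of $v_S$), and the inductive step combines Lemma~\ref{twosuffice} with the two-vertex case, your leaf-of-a-spanning-tree choice of $s_n$ being the paper's non-cut-vertex choice. No gaps.
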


\begin{proof}
First consider the case when $S$ consists of two non-adjacent vertices 
$z,z' \in V_{\Gamma}$.
Define $\phi: A(\Gamma) \to \langle x \ ;\ x^2 \rangle$  by $\phi(z) = x$,
and $\phi(v) = 1$ if $v \neq z$.  Let $A(\Gamma') = \ker \phi$.  Let
\[T = (V(\Gamma) \backslash \{z^2,z'\}) \cup \{zz'z^{-1}\} 
\subset V(\Gamma').\]
We claim that $\Gamma'_{T} \cong \overline{CO}(\Gamma,S)$ via 
$v \mapsto v$ if $v \neq zz'z^{-1}$ and $zz'z^{-1} \mapsto v_{S}$.

If $v$ and $w$ are distinct from $z$ and $z'$, then $v$ and
$w$ are adjacent in $\overline{CO}(\Gamma,S)$ if and only if
they are adjacent in $\Gamma$.

On the other hand, a vertex $w$ is adjacent to $v_S$ in 
$\overline{CO}(\Gamma,S)$ if and only if $w$ is adjacent to $z$ and $z'$,  
whereas a vertex $w$ is adjacent to $zz'z^{-1}$ in $\Gamma'_{T}$ if and only 
if $w$ belongs to the link of $z$ and to the link of $z'$, i.e. $w$ is adjacent
to $z$ and $z'$.  Therefore, $\Gamma'_{T} \cong \overline{CO}(\Gamma,S)$
and, hence, $A(\overline{CO}(\Gamma,S))$ embeds in $A(\Gamma)$.  

Now we prove the general statement by induction on $|S|$.  Suppose 
$S = \{s_1, \dots, s_n\}$ is anti-connected, and suppose we have chosen the 
ordering so that $S' =\{s_1, \dots, s_{n-1}\}$ is also anti-connected.  (This 
is always possible: choose $s_n$ so that it is not a cut point of 
$\overline{\Gamma}_{S}$.) Let $\Lambda = \overline{CO}(\Gamma,S')$.  Suppose 
that $A(\Lambda)$ embeds in $A(\Gamma)$.  By the case of two vertices above, 
$A(\overline{CO}(\Lambda,\{v_{S'}, s_n\}))$ embeds in $A(\Lambda)$.  (Note 
that $v_{S'}$ and $s_n$ are not adjacent in $\Lambda$ for, otherwise, $s_n$
would be adjacent to every $s_i$, $i = 1, \dots n-1$, which would contradict
the hypothesis that $S$ is anti-connected.)  This proves the inductive step.
The proof of the theorem is completed by applying Lemma~\ref{twosuffice}.

\end{proof}

\bibliography{mybib}{}
\bibliographystyle{plain}
\end{document}